\newcommand{\arc}{\operatorname{arc}}
\newtheorem{theorem}{Theorem}
\newtheorem{lemma}{Lemma}
\newtheorem{corollary}{Corollary}
\begin{document}

\title{Rigid Origami Vertices: Conditions and Forcing Sets\thanks{E. Demaine supported in part by NSF ODISSEI grant EFRI-1240383 and NSF Expedition grant CCF-1138967. D. Eppstein supported by NSF grant CCF-1228639 and  ONR/MURI grant N00014-08-1-1015.  T. Hull supported by NSF ODISSEI grant EFRI-1240441.  R. J. Lang supported in part by NSF ODISSEI grants EFRI-1240441, EFRI-1240417, EFRI-1332249, and EFRI-1332271.
T. Tachi supported by the Japan Science and Technology Agency Presto program.}
}


\author{Zachary Abel\thanks{MIT, {\tt \{zabel, edemaine, jasonku\}@mit.edu}}     \and
            Jason Cantarella\thanks{UGA, {\tt jason@math.uga.edu}} \and
            Erik D. Demaine\footnotemark[2] \and
            David Eppstein\thanks{Univ. of California, Irvine, {\tt david.eppstein@gmail.com}} \and
            Thomas C. Hull\thanks{Western New England Univ., {\tt thull@wne.edu}} \and
            Jason S. Ku\footnotemark[2] \and
            Robert J. Lang\thanks{Lang Origami, CA, {\tt robert@langorigami.com}} \and
            Tomohiro Tachi\thanks{Univ. of Tokyo, {\tt tachi@idea.c.u-tokyo.ac.jp}}
}




\maketitle

\begin{abstract}
We develop an intrinsic necessary and sufficient condition for single-vertex origami crease patterns to be able to fold rigidly.  We classify such patterns in the case where the creases are pre-assigned to be mountains and valleys as well as in the unassigned case.  We also illustrate the utility of this result by applying it to the new concept of minimal forcing sets for rigid origami models, which are the smallest collection of creases that, when folded, will force all the other creases to fold in a prescribed way.  
\end{abstract}

\section{Introduction}
\label{intro}

Rigid origami, where stiff panels are folded along hinged creases, has numerous applications in kinetic architecture \cite{Tachi2}, deploying solar panels into outer space \cite{Miura,Schenk}, and robotics \cite{Balkcom}, to name a few.  Such applications have motivated much research into the mathematics and mechanics of rigid folding over many years  \cite{Huff,Miura1,Tachi1}.  However, a complete theory of the mathematics of rigid origami is not yet complete.

Whenever we apply or design rigid origami systems, we encounter a common important problem called \emph{rigid foldability}, which is a problem of judging whether a rigid origami on given crease pattern can continuously transform from a planar state to an intermediate folded state.
Rigid foldability has been represented using extrinsic parameters of the folded state, e.g., the existence of a set of fold angles satisfying compatibility conditions \cite{belcastro,Kawa} or the existence of intermediate state \cite{Streinu,Tachi0}. However, completely intrinsic representation of rigid foldability, i.e., conditions of crease patterns to be rigidly foldable, is not yet characterized even for a single-vertex origami.
In this paper, we give a necessary and sufficient condition that characterizes rigidly foldable single-vertex crease patterns. We believe that this serves as the basic theorem for rigid-foldable origami, as an equivalent of Kawasaki's and Maekawa's theorems \cite{Dem,Hull} for flat-foldable origami.

In addition, we showcase the utility of our result by applying it to the new concept of a {\em forcing set} for rigidly foldable vertices. A forcing set is a subset of the creases whose folded state uniquely determines the folded state of the remaining creases.  Finding forcing sets for origami crease patterns is useful for the emerging field of self-folding mechanics, where programming only a subset of the creases to self-fold and achieve a desired folded state can be more economical \cite{Hull5}. We use our single-vertex rigid foldability result to characterize the possible sizes of minimal forcing sets in such vertex folds. 

This work was developed by the authors during the 29th Winter Workshop on Computational Geometry at the Bellairs Research Institute of McGill University, March 2014.

\section{Rigid Vertex Model}
\label{sec:1}

We begin by defining a model for rigid origami similar in concept to that of \cite{Balkcom,belcastro,Dem,Tachi3}.    Given a region $A\subseteq\mathbb{R}^2$ (often referred to as the {\em paper}), a {\em crease pattern} $C$ is a planar straight-line graph embedding that partitions $A$ into vertices, edges, and faces.  A {\em rigid origami} (or {\em rigid folding}) $f:A\rightarrow \mathbb{R}^3$ is an injective, continuous map that is non-differentiable only on $C$ and an isometry on each face of $C$.  The vertices and edges of $C$ are called the {\em vertices} and {\em crease lines}, respectively, of the rigid folding.  At each crease line $c$ we will refer to the {\em folding angle} of $c$ to be the signed angle by which the two faces adjacent to $c$ deviates from a flat plane.
We note that by requiring $f$ to be injective, we require all the folding angles at the creases to be unequal to $\pi$ or $-\pi$, although we can make them as close to these angles as we wish.  Dealing with completely flat-folded creases (folding angles equal to $\pm\pi$) involves combinatorial issues that will not be relevant in this paper.  (For more information, see \cite{Dem}.)

A {\em single-vertex} rigid origami is one with more than two creases where the all of the crease lines intersect at a single point in the interior of $A$.  After single-fold origami, single-vertex crease patterns are the simplest types of origami examples, and the combinatorics of single-vertex crease patterns that fold flat (with no attention to rigidity) have been extensively explored \cite{Hull,Hull1,Chang}.

When we fold a rigid origami model, we must assign the creases to be either {\em mountain} (convex) or {\em valley} (concave).  Formally, we define a {\em mountain-valley (MV) assignment} of a crease pattern $C$ to be a function $\mu:C\rightarrow \{M,V\}$.  When the paper is unfolded, the folding angle at each crease will be zero.  As the paper is rigidly folded, a valley crease will have a positive folding angle, while a mountain crease will have a negative folding angle.

The folding angles at the creases give us a way to describe a {\em continuously parameterized family} of rigid origami folds from the same crease pattern $C$ that track how a crease pattern can be rigidly folded from the flat, unfolded state to a folded configuration where every crease has a non-zero folding angle.  In practice, we would model the rigid origami like a robotic arm linkage and determine the number of degrees of freedom of the crease pattern (as done in \cite{Balkcom} and \cite{Tachi1}) to see how many parameters we would need.  In this paper, however, we will not need to delve into that amount of detail.  Note, however, that the existence of a rigid origami for a given crease pattern and MV assignment implies that there exists a path in the parameter space to the flat, unfolded state because the configuration space of single-vertex rigid origami motions has been proven to be connected \cite{Streinu}.

Our goal is to develop conditions on a single-vertex crease pattern $C$ with a MV assignment $\mu$ to guarantee that it will be able to rigidly fold, say from the unfolded state continuously to a folded state in which every crease in $C$ has non-zero folding angle.

\begin{figure}
\centerline{\includegraphics[scale=.4]{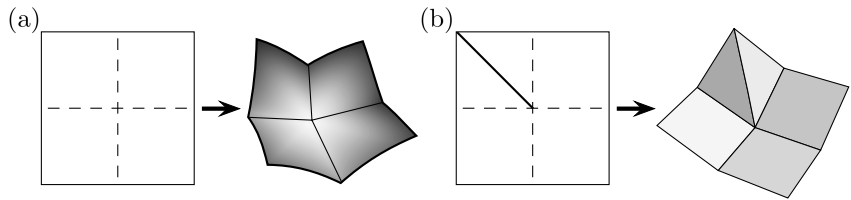}}
\caption{(a) A degree 4 vertex with all valley creases cannot fold rigidly, even a little bit. (b) With a mountain crease added, however, it will rigidly fold.}\label{fig1}
\end{figure}

Figure \ref{fig1}(a) shows an example of a vertex that will not rigidly fold, namely a degree four vertex where every crease is a valley.  If all four valley creases are folded, even by a small amount, the regions of paper between them will be forced to curve and thus be non-rigid. (See \cite{Hull2,Miura1} for a formal proof of this.)  However, if we add a mountain crease to this vertex between any two of the valleys, it will be able to fold rigidly.  An example of this is shown in Figure \ref{fig1}(b).   

\section{Conditions for a Vertex to Fold Rigidly}
\label{sec:2}

Let $\angle(c,d)$ denote the oriented angle going counterclockwise between two creases $c$ and $d$ that meet at a vertex.

Given a single-vertex crease pattern $C$ together with a MV assignment $\mu:C\rightarrow \{M,V\}$, we say that the pair {\em $(C,\mu)$ has a  tripod} (or a {\em mountain tripod}) if there exist three creases $c_1, c_2, c_3\in C$ in counterclockwise order, not necessarily contiguous (e.g., there could be other creases between $c_1$ and $c_2$, etc.) with the property that $\mu(c_i)=M$ for $i=1, 2, 3$ and $0<\angle(c_i, c_{i+1}) <\pi$ for $i=1,2,3$ (mod 3).  A {\em valley tripod} is a tripod but with $\mu(c_i)=V$ for $i=1, 2, 3$. See Figure~\ref{fig2}.

We also say that $(C,\mu)$ contains a {\em mountain (resp. valley) cross} if there exists four creases in counterclockwise order $c_1, c_2, c_3, c_4\in C$, not necessarily contiguous, where $c_1$ and $c_3$ form a straight line, as do $c_2$ and $c_4$, and $\mu(c_i)=M$ (resp. $\mu(c_i)=V$) for $i=1, 2, 3, 4$.

We say that $(C,\mu)$ contains a {\em bird's foot} if it contains either a tripod or a cross together with another crease that has the opposite MV-parity than the tripod/cross.  Figure~\ref{fig2} shows bird's feet made from a mountain tripod (left, creases $c_1$--$c_4$) and a mountain cross (right, creases $c_1$--$c_5$).

\begin{figure}
\centerline{\includegraphics[scale=.7]{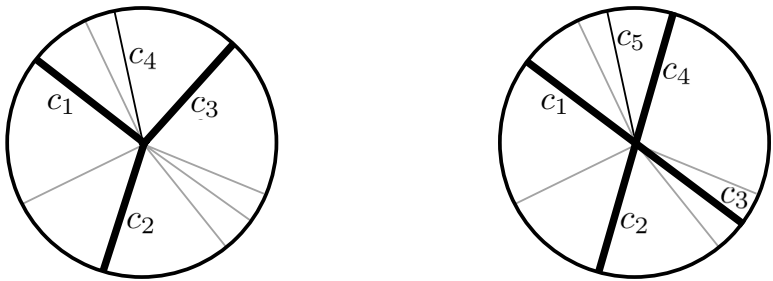}}
\caption{Single-vertex crease patterns with a mountain tripod (left) and a mountain cross (right). These are also bird's feet.}\label{fig2}
\end{figure}

\subsection{Main theorem}\label{sec:3.1}

\begin{theorem}\label{thm1}
A single-vertex crease pattern $(C,\mu)$ can be continuously parameterized in a family of rigid origami folds if and only if $(C,\mu)$ contains a bird's foot.
\end{theorem}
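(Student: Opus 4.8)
The plan is to translate rigid foldability of a single vertex into a statement about spherical polygons and then prove the two directions separately. Intersecting a small sphere about the vertex with a folded state produces a closed spherical polygon $\Pi$ whose $i$-th edge has length equal to the sector angle $\alpha_i$ between consecutive creases (these lengths are invariant, since faces are rigid) and whose angle at the $i$-th vertex is $\pi-\rho_i$, where $\rho_i$ is the folding angle at crease $c_i$; valley creases thus become ``convex'' turns of $\Pi$ and mountain creases ``reflex'' turns, up to the global mirror symmetry that interchanges $M$ and $V$. The flat state is the degenerate $\Pi$ tracing a great circle once, and every $\Pi$ has perimeter $\sum_i\alpha_i=2\pi$. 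Using the connectivity of the configuration space of single-vertex rigid foldings~\cite{Streinu}, $(C,\mu)$ can be continuously rigidly folded to a state with all folding angles nonzero if and only if there exists a non-degenerate $\Pi$ with these prescribed edge lengths, realizing the convex/reflex pattern dictated by $\mu$, and having no vertex angle equal to $\pi$.

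For necessity, suppose such a non-degenerate $\Pi$ exists. It is a closed curve of length exactly $2\pi$ on the unit sphere that is not a great circle, so by a classical global argument it is contained in a closed hemisphere $H$ and meets the bounding great circle $\partial H$. I would then study $\Pi\cap\partial H$: at a vertex of $\Pi$ lying on $\partial H$ the polygon must turn back into $H$, which forces the sign of that folding angle---say all such ``extreme'' creases are mountains, the valley case being the mirror image---and forces their cyclic angular gaps around the central vertex each to lie strictly in $(0,\pi)$, producing a mountain tripod; the degenerate case where the extreme contacts occur in antipodal pairs produces a mountain cross instead. Since $\Pi$ is non-degenerate with no vertex angle $\pi$, it can be neither everywhere convex (a strictly convex spherical polygon with at least three vertices has perimeter $<2\pi$) nor everywhere reflex, so $(C,\mu)$ also has a crease of the opposite parity; combining, $(C,\mu)$ contains a bird's foot.

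For sufficiency, suppose $(C,\mu)$ contains a bird's foot $B$. I would first show it suffices to fold $B$ alone: if $(C',\mu')$ folds rigidly with all angles nonzero and $C=C'\cup\{c_{\mathrm{new}}\}$ with an arbitrary value $\mu(c_{\mathrm{new}})\in\{M,V\}$, then the new crease adds one degree of freedom and the configuration space of $C'$ sits inside that of $C$ as the locus $\{\rho_{\mathrm{new}}=0\}$; a transversality argument at a (generic) folded state of $C'$ lets one push off this locus with $\rho_{\mathrm{new}}$ of either prescribed sign while all other angles stay nonzero, and induction on the number of extra creases completes the reduction. It then remains to fold a bare bird's foot: a degree-$4$ vertex (tripod $+$ opposite crease) or degree-$5$ vertex (cross $+$ opposite crease). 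For degree $4$, the three tripod creases positively span the plane, so the linearized closure equation $\sum_i\dot\rho_i\,\hat e_i=\vec 0$ in the crease directions $\hat e_i$ has a solution with the three tripod rates of one sign and the fourth of the other; one then checks---via the explicit degree-$4$ folding equations, or the second-order closure relation---that this first-order flex lies on an honest branch of the configuration space, producing the fold. The degree-$5$ cross case is handled identically, with the cross replacing the tripod and the extra crease absorbing the obstruction that makes a bare tripod or cross rigid.

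The main obstacle is the necessity direction: converting ``$\Pi$ lies in a closed hemisphere and touches $\partial H$'' into the exact combinatorial conclusion (three like-signed creases forming a tripod, or four forming a cross) requires careful treatment of edges of $\Pi$ running along $\partial H$, of the contact vertices, and of the requirement that the extreme creases share a single parity and have all cyclic gaps strictly in $(0,\pi)$; even the claim that a non-degenerate $\Pi$ must meet the boundary of some enclosing hemisphere needs justification. A secondary difficulty is the transversality step in the sufficiency reduction, which can fail at non-generic configurations, so one may need to perturb within the configuration space or analyze the higher-order closure conditions directly---indeed, first- and second-order flex conditions alone do not certify that a configuration with all creases nonzero is reachable, as the non-foldable example of two collinear like-signed creases plus one opposite crease illustrates.
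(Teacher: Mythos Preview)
Your overall strategy is sound and genuinely different from the paper's, but the obstacles you flag are real, and the paper's proof sidesteps all of them by arguing more directly.

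For \emph{necessity}, the paper does \emph{not} use any hemisphere-containment theorem for the folded polygon. Instead it argues combinatorially first: if $(C,\mu)$ contains no tripod and no cross, then the set of mountain creases lies in a closed half-disk and likewise the valley creases; one can then choose overlapping semicircles $S_1\supset\{M\text{'s}\}$ and $S_2\supset\{V\text{'s}\}$ so that some sector $S$ lies in neither. Fixing $S$ in the equatorial plane of a sphere, the paper tracks where the two antipodal points $a'$ and $b'$ (antipodes of the endpoints of the creases bounding $S$) can move: all creases between $b$ and $a'$ are valleys, so $a'$ can only move into a cap $C_b(\pi-\alpha)$ and specifically above the equator, while $b'$ symmetrically moves below; but then $\arc(a',b')$ strictly increases, contradicting rigidity. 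This avoids entirely the delicate step in your plan of reading off a tripod or cross from $\Pi\cap\partial H$: note in particular that your ``extreme'' vertices on $\partial H$ need not have pairwise cyclic gaps in $(0,\pi)$ in the \emph{flat} picture (those gaps are arc lengths along $\Pi$, not positions on $\partial H$), so promoting them to a tripod requires exactly the case analysis you worried about.

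For \emph{sufficiency}, the paper is fully constructive and never invokes transversality or first/second-order flex conditions. Lemma~3 builds an explicit folded state of a bare tripodal bird's foot (and, mutatis mutandis, a cross bird's foot) by placing $c_1,c_3$ on the equator and erecting two spherical triangles $c_1c_2c_3$ and $c_1c_4c_3$ in the northern hemisphere; the signs of all four folding angles are then read off from the triangle angles. Lemma~2 (``closed under adding creases'') is proved by a concrete shrink-and-replace construction: Lemma~1 shows one can shorten the arc $c_ic_{i+1}$ slightly without changing any vertex parity, and then one replaces that arc by a small ``tent'' or ``trough'' inserting the new crease with the desired sign. Connectivity of the single-vertex configuration space~\cite{Streinu} then gives the continuous family. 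This bypasses your transversality worry at non-generic configurations altogether.

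In short, your hemisphere idea is a reasonable alternative route for necessity, but you would still owe the step from ``$\Pi$ touches $\partial H$'' to ``the touching creases form a tripod or cross in the \emph{unfolded} pattern,'' and your infinitesimal approach to sufficiency would need to be upgraded to an actual folded state; the paper achieves both directions by elementary, explicit spherical-geometry constructions.
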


We will prove Theorem \ref{thm1} by considering our vertex to be at the center of a sphere sufficiently small so that every crease at the vertex intersects with the sphere.  Our rigid origami vertex will then intersect our sphere along arcs of great circles, and so an equivalent way to think about our rigid origami vertex as a {\em closed spherical linkage} that forms a non-intersecting closed loop on the sphere.  Such a linkage will have vertices $c_i$ corresponding to the creases of our rigid origami, and we can establish that the angle $\alpha_i$ on the sphere at vertex $c_i$ is the supplement of the folding angle at crease $c_i$.  This also means  that we can carry the MV assignment $\mu$ from the creases to the vertices of our spherical linkage.

Let $\arc(c_i, c_{i+1})$ denote the length of the spherical arc between consecutive vertices $c_i$ and $c_{i+1}$ in a spherical linkage. By the assumption that the folding pattern has more than two creases, each $\arc(c_i, c_{i+1})$ is less than $\pi$.

\begin{lemma}\label{lem1}
Given a closed spherical linkage $C = \{c_1, ..., c_k\}$ where the $c_i$ are the vertices, in order, embedded on a sphere in some configuration (i.e., with angles $\alpha_i\notin\{0, \pi\}$ at each vertex $c_i$ for $i=1, ..., k$ and with  $0<\arc(c_i, c_{i+1})<\pi$ for each arc), we may shrink the length of any $\arc(c_i, c_{i+1})$ by some small amount and it will only change the angles $\alpha_{i-1}$ and $\alpha_i$ by a small amount without changing the MV parity of the vertices. 
\end{lemma}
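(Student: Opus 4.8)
The plan is to realize the shrinking by moving a single vertex along a great circle. Let $G$ be the great circle through $c_i$ and $c_{i+1}$, which is well defined since $0<\arc(c_i,c_{i+1})<\pi$ forces these two points to be neither equal nor antipodal. For small $t\ge 0$ let $c_i(t)$ be the point reached from $c_i$ by moving spherical distance $t$ along $G$ toward $c_{i+1}$, so that $c_i(0)=c_i$ and $c_i(t)$ lies on the minor arc of $G$ from $c_i$ to $c_{i+1}$; set $C(t)=\{c_1,\dots,c_{i-1},c_i(t),c_{i+1},\dots,c_k\}$, keeping every vertex other than $c_i$ fixed. I claim $C(t)$ is the required family.

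First I would handle the arc lengths. Since $c_i(t)$ moves along $G$ toward $c_{i+1}$ within a minor arc, $\arc(c_i(t),c_{i+1})$ is strictly decreasing in $t$ and stays in $(0,\pi)$ for $t$ small, which is the desired shrinking. The arc $\arc(c_{i-1},c_i(t))$ varies continuously and, starting strictly inside $(0,\pi)$, remains there for $t$ small; this is the only other arc that moves, since every remaining arc has both of its endpoints fixed.

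The heart of the argument is that this particular motion does not disturb $\alpha_{i+1}$. For every $t$, the arc of $C(t)$ from $c_{i+1}$ to $c_i(t)$ is the sub-arc of the fixed great circle $G$ leaving $c_{i+1}$ in a single fixed direction (namely the tangent to $G$ at $c_{i+1}$ pointing toward $c_i$), so its tangent at $c_{i+1}$ does not depend on $t$, while the other arc at $c_{i+1}$, running to the fixed vertex $c_{i+2}$, does not move at all; hence $\alpha_{i+1}(t)\equiv\alpha_{i+1}(0)$. Every vertex $c_j$ with $j\notin\{i-1,i,i+1\}$ has both incident arcs unmoved, so $\alpha_j$ is constant as well. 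Thus the only angles that can change are $\alpha_{i-1}$ and $\alpha_i$, each a continuous function of $t$. (The hypothesis enters precisely here: $\alpha_i\notin\{0,\pi\}$ guarantees $c_{i-1}\notin G$, so $\alpha_i$ stays a genuine corner, and by continuity both $\alpha_{i-1},\alpha_i$ remain bounded away from $\{0,\pi\}$ under a small move.)

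It remains to collect the ``$t$ small'' conclusions, and here I anticipate no real obstacle. The polygon $C(0)$ is embedded, and being non-self-intersecting is stable under small perturbations of the vertices (disjoint closed sub-arcs sit at positive distance, and consecutive arcs meet only at their shared vertex); since the only arcs that move stay within a small neighbourhood of their original positions, $C(t)$ is embedded for $t$ small. Continuity keeps $\alpha_{i-1}(t),\alpha_i(t)$ near their initial values, hence away from $\{0,\pi\}$ and on the same side of $\pi$, so no vertex changes MV parity (the unmoved vertices trivially keep theirs). Choosing $t$ small enough to satisfy all of these at once completes the proof. If any step is delicate, it is the great-circle observation above: a generic small perturbation of $c_i$ would move all three of $\alpha_{i-1},\alpha_i,\alpha_{i+1}$, and it is exactly the choice to slide along $G$ that freezes $\alpha_{i+1}$ and brings the count down to the two claimed.
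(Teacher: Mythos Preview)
Your argument is correct and in fact matches the asymmetric wording of the lemma more precisely than the paper's own proof does: by sliding $c_i$ along the great circle $G$ toward $c_{i+1}$ you freeze the tangent direction at $c_{i+1}$, so only $\alpha_{i-1}$ and $\alpha_i$ move. The paper takes a genuinely different route. It rotates $c_i$ about $c_{i-1}$---arguing via a transversality/non-tangency observation that this rotation can strictly decrease $\arc(c_i,c_{i+1})$ because tangency of the two relevant spherical circles at $c_i$ would force $\alpha_i=\pi$---and does the symmetric thing at $c_{i+1}$. That construction also perturbs $\alpha_{i+1}$ (and $\alpha_{i+2}$), so strictly speaking it moves more angles than the lemma statement advertises.

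The trade-off, however, cuts the other way and is worth flagging. Your slide along $G$ changes the neighbouring arc length $\arc(c_{i-1},c_i)$, which you note but do not treat as a defect; the paper's rotation about $c_{i-1}$ is chosen exactly so that every arc length \emph{other} than $\arc(c_i,c_{i+1})$ is preserved. That is the property actually used downstream: in Lemma~\ref{lem2} the arc lengths are the sector angles of a fixed crease pattern, and after shrinking $\arc(c_i,c_{i+1})$ one replaces it by a tent whose two pieces sum to the \emph{original} $\arc(c_i,c_{i+1})$, yielding a linkage with the arc lengths of $C\cup\{c'\}$. With your construction $\arc(c_{i-1},c_i)$ would be off, and the tent replacement would no longer produce a valid folded state of $C\cup\{c'\}$. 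So while your proof settles the lemma as literally stated, the implicit content the rest of the paper relies on is ``shrink one arc, keep the others fixed, nearby angles move only slightly''; the precise list of perturbed angles is immaterial there.
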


\begin{proof}
Draw a circle $C_a$ on the sphere whose center is the midpoint of $\arc(c_i, c_{i+1})$ such that the circle contains the points $c_i$ and $c_{i+1}$ (i.e., has the proper radius).  Note that $C_a$ will lie in a hemisphere because $\arc(c_i,c_{i+1})<\pi$.   Draw a circle $C_i$ centered at $c_{i-1}$ such that the circle contains $c_i$. (See Figure~\ref{fig2.5}; the assumption that $\arc(c_{i-1}, c_i)<\pi$ prevents $C_i$ from being degenerate.) Then $C_i$ represents the region on the sphere where vertex $c_i$ can be if we rotate it about $c_{i-1}$.  We claim that $C_a$ and $C_i$ are not tangent, for if they were then we'd have $\alpha_{i}=\pi$ and the vertex $c_{i}$ would not be folded.  Thus $C_a$ and $C_i$ overlap, and so we can shrink the length of $\arc(c_i, c_{i+1})$ a little bit and rotate $c_i$ around $c_{i-1}$ a little bit to have vertex $c_i$ still be connected to the rest of the linkage.   A similar argument will work for the vertex $c_{i+1}$.  
\end{proof}

\begin{figure}
\centerline{\includegraphics[scale=.7]{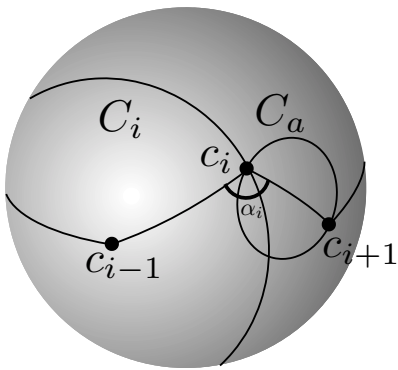}}
\caption{Illustration of the spherical linkage for Lemma \ref{lem1}.}\label{fig2.5}
\end{figure}

The next lemma is interesting in its own right and will be fundamental in the proof of Theorem \ref{thm1} as well as the result on forcing sets in Section~\ref{sec:4}.

\begin{lemma}\label{lem2}
Single-vertex continuously parameterized rigid foldability is closed under adding creases.
\end{lemma}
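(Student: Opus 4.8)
The plan is to pass to the spherical-linkage picture and insert the new crease in two stages: first I would rigidly fold up the original pattern while the new vertex sits, unfolded, on a geodesic, and then I would create a little slack at its two neighbors, via Lemma~\ref{lem1}, so that the new vertex can bulge off the geodesic and pick up a nonzero folding angle of the prescribed sign. Suppose $(C,\mu)$ admits a continuously parameterized family of rigid folds; it suffices to show that inserting one crease $c$, between consecutive creases $c_i,c_{i+1}$ of $C$ and carrying any prescribed MV label, yields a pattern $(C',\mu')$ that does too, since the general statement then follows by iteration. On a small sphere about the vertex, $C$ becomes a closed spherical linkage $L$ whose arc lengths $\arc(c_j,c_{j+1})=\angle(c_j,c_{j+1})$ are constants of the crease pattern, and the given family is a continuous path $\gamma$ in the configuration space of $L$ running from the flat state (all vertex angles equal to $\pi$) to a state in which every vertex angle lies in $(0,\pi)\cup(\pi,2\pi)$ with parities matching $\mu$. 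The linkage $L'$ for $C'$ is just $L$ with an extra vertex $c$ inserted on the arc $\arc(c_i,c_{i+1})$, splitting it into arcs of the fixed lengths $\angle(c_i,c)$ and $\angle(c,c_{i+1})$, both of which lie in $(0,\pi)$.

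For \emph{Stage~1}, for each $t$ I would take the configuration $\gamma(t)$ of $L$ and place $c$ at arc-distance $\angle(c_i,c)$ from $c_i$ along the geodesic from $c_i$ to $c_{i+1}$; since that geodesic always has length $\angle(c_i,c)+\angle(c,c_{i+1})$, this placement is well defined, continuous in $t$, and produces a valid configuration of $L'$ (simple, and with no vertex angle equal to $0$) in which $\alpha_c=\pi$, i.e.\ the folding angle at $c$ is $0$, throughout. This is a continuous family of rigid folds of $(C',\mu')$ running from the flat state to the configuration ``$\gamma(1)$ with $c$ on the geodesic'', in which every crease of $C$ is strictly folded exactly as $\mu$ prescribes while $c$ is not yet folded. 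For \emph{Stage~2}, I would invoke Lemma~\ref{lem1} to shrink the arc $\arc(c_i,c_{i+1})$ of $L$ continuously by a small amount $\varepsilon$: this deforms $L$ through valid configurations, changing only the vertex angles near $c_i$ and $c_{i+1}$ and preserving all MV parities, and since those angles are strictly folded in $\gamma(1)$ they stay strictly folded for $\varepsilon$ small. As the geodesic distance from $c_i$ to $c_{i+1}$ drops below $\angle(c_i,c)+\angle(c,c_{i+1})$, the set of points at distance $\angle(c_i,c)$ from $c_i$ and $\angle(c,c_{i+1})$ from $c_{i+1}$ opens up from a single point on the geodesic into two points symmetric across it; I would follow the branch on the side dictated by $\mu'(c)$, which carries $c$ off the geodesic, so $\alpha_c$ moves continuously from $\pi$ to a nearby value in $(0,\pi)\cup(\pi,2\pi)$ of the correct parity, staying clear of $0$ for $\varepsilon$ small. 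Concatenating Stage~1 with Stage~2 (the junction configurations agree) gives a continuous family of rigid folds of $(C',\mu')$ from the flat state to a state in which every crease, $c$ included, is strictly folded with the parity prescribed by $\mu'$, which is what is required.

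The one genuine obstacle is the new vertex itself: inserted on the geodesic it is unfolded, and to fold it one must draw its neighbors $c_i,c_{i+1}$ closer together, which is impossible inside $L$ (whose arcs have fixed lengths) but is exactly the local move Lemma~\ref{lem1} supplies while $c$ is still absent. The remaining points --- continuity of the Stage~2 deformation (it tracks an implicitly defined intersection of two smoothly varying circles), the absence of any new self-intersection, and $\alpha_c$ never reaching $0$ --- are all handled by taking $\varepsilon$ small enough.
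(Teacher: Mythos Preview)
Your proof is correct and uses the same key mechanism as the paper's: invoke Lemma~\ref{lem1} to shrink the arc $\arc(c_i,c_{i+1})$ in the original linkage, then replace that arc by a ``tent'' or ``trough'' through $c$ with the prescribed sub-arc lengths and MV parity. The one notable difference is in how the continuous family is obtained. The paper simply constructs a \emph{single} rigidly folded state of $C\cup\{c'\}$ and then appeals to \cite{Streinu} (connectedness of the configuration space of single-vertex folds) to conclude that this state is joined to the flat state by a continuous path. You instead build that path explicitly: Stage~1 carries the given family $\gamma$ with $c$ sitting unfolded on the geodesic, and Stage~2 parameterizes the shrinking-and-bulging continuously in $\varepsilon$. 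Your version is therefore more self-contained, avoiding the external connectedness result, at the cost of a little extra bookkeeping about the continuity of the branch choice for $c$; the paper's version is shorter but leans on \cite{Streinu}.
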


Here by ``closed under adding creases" we mean that if a single-vertex crease pattern $C$ is rigidly foldable (by some non-zero dihedral angles of all the creases) then so is $C\cup\{c'\}$ where $c'$ is some new crease added to the single vertex.

\begin{proof}
Let $C=\{c_1, ..., c_k\}$ be a single-vertex crease pattern that is rigidly foldable.  
Suppose the new crease $c'$ that we are adding is between creases $c_i$ and $c_{i+1}$.  Consider the vertex to be at the center of a sphere and fold $C$ into a rigid configuration.  We know $\angle(c_i,c_{i+1})<\pi$, for if otherwise then the other angles would form a shortest path on the sphere from the endpoint of $c_i$ to the endpoint of $c_{i+1}$, meaning they couldn't be folded any closer together, and thus the other creases couldn't be in a valid folded configuration.  Now use Lemma \ref{lem1} to shrink $\arc(c_i, c_{i+1})$ a little bit.  Then replace $\arc(c_i, c_{i+1})$ with a small ``tent" (if we want a M) or a ``trough" (for a V) at the position where we want $c'$ {\em and} so that the original  $\arc(c_i, c_{i+1}) = \arc(c_i, c') + \arc(c', c_{i+1})$.  See Figure \ref{fig3}.  Since we first decreased the length of $\arc(c_i, c_{i+1})$, we will have that the folding angle at $c'$ will not be zero, and so the crease $c'$ will be folded and all the other creases are at most only changed slightly from their previous folding angles.  The result is the new crease pattern $C\cup\{c'\}$ that is in a rigidly-folded configuration.  Furthermore, since this demonstrates a rigidly-folded state of the single-vertex crease pattern $C\cup\{c'\}$, we know it can be continuously parameterized to the unfolded state by \cite{Streinu}. 
\end{proof}

\begin{figure}
\centerline{\includegraphics[scale=.6]{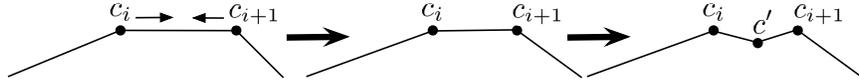}}
\caption{Adding a crease to a single-vertex crease pattern, as witnessed in the proof of Lemma 2.}\label{fig3}
\end{figure}

\begin{lemma}\label{lem3}
Degree 4 and 5 vertices that are bird's feet will rigidly fold.
\end{lemma}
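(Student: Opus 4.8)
The plan is to verify the two base cases of a later induction directly by constructing explicit rigid foldings, handling the degree 4 and degree 5 cases in turn. For a degree 4 bird's foot, I would first note that a bird's foot on four creases must consist of a tripod (three creases of one parity, say three mountains) together with the fourth crease being a valley — there is no room for a cross plus an extra crease on only four creases. So up to relabeling we have creases $c_1, c_2, c_3$ mountain with all three consecutive angles $\angle(c_i,c_{i+1})$ strictly between $0$ and $\pi$, and $c_4$ (lying in one of the three sectors, say between $c_3$ and $c_1$) a valley. On the small sphere centered at the vertex this is a spherical quadrilateral with prescribed side lengths $\arc(c_i,c_{i+1})$ and a prescribed MV pattern $M,M,M,V$ at the four corners. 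I would show such a quadrilateral can be rigidly folded by exhibiting a one-parameter family: pick a small folding angle at $c_4$, and solve for the (unique, by the spherical law of cosines) position of the remaining vertices so that the three mountain angles come out with the correct sign. The key point is a sign/continuity argument: as the valley fold at $c_4$ opens from $0$, the "diagonal" arc $\arc(c_1,c_3)$ across the $V$ sector shrinks, which by Lemma~\ref{lem1}-type reasoning forces the angles at $c_1$ and $c_3$ in the remaining triangle $c_1 c_2 c_3$ to move off $\pi$, and the tripod condition $0<\angle(c_i,c_{i+1})<\pi$ for the mountain triangle is exactly what guarantees that, near the flat state, these three angles can simultaneously realize mountain signs (rather than being pinned at $\pi$ or flipping to valley).

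For the degree 5 case I would do the analogous thing for a spherical pentagon, but here both flavors of bird's foot can occur: a tripod (three creases of one parity) plus two extra creases of the other parity, or a cross (four creases of one parity, with $c_1,c_3$ collinear and $c_2,c_4$ collinear) plus one extra crease of the opposite parity. I would reduce the pentagon to a previously-handled configuration by collapsing one sector: choose a crease $c_j$ that is "extra" (opposite parity to the tripod/cross) — such a crease always exists by definition of bird's foot — and show that one can perform a small fold at $c_j$, thereby effectively merging $c_j$'s two neighboring arcs into a single arc and reducing to a degree 4 spherical linkage that still contains a bird's foot (the tripod or cross survives because we only removed an extra crease). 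Then invoke the degree 4 case. Making this precise requires checking that the merged quadrilateral's side lengths are still all $<\pi$ and that folding $c_j$ by a small amount is compatible with the degree 4 folding just constructed — this is essentially running the construction in Lemma~\ref{lem2} in reverse, or equivalently observing that folding the degree 4 pattern and then inserting the extra crease $c_j$ (via Lemma~\ref{lem2}) with the right MV parity lands us back in the degree 5 pattern.

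Alternatively — and this may be the cleaner route — I would prove the degree 5 case by a pure dimension count plus the base case: a generic degree 5 single-vertex pattern has one internal degree of freedom, and the configuration space is an algebraic curve through the flat state; the content is to show this curve is not the isolated flat point, which again comes down to the cross/tripod angle inequalities ruling out the degenerate pinning.

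**Main obstacle.** The hard part will be the sign analysis at the flat state: showing that the bird's-foot angle inequalities ($0<\angle(c_i,c_{i+1})<\pi$ for the tripod creases, or collinearity for the cross creases, together with the existence of the opposite-parity crease) are exactly what permits the three-or-four distinguished corner angles to move off $\{0,\pi\}$ with the correct signs simultaneously, rather than being frozen. The spherical geometry is easy to write down via laws of cosines, but extracting the correct first-order (and, where the first derivative vanishes, second-order) behavior of these corner angles as a function of the folding parameter — and confirming it yields a genuine mountain/valley pattern rather than a flat or sign-flipped one — is the delicate calculation I would need to push through carefully, likely case by case on where the opposite-parity crease sits relative to the tripod/cross.
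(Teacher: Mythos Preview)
Your degree~4 argument is essentially the paper's: both decompose the spherical quadrilateral along the diagonal $c_1c_3$ into two spherical triangles---one carrying the three tripod vertices, one carrying the lone opposite-parity crease---and then verify the corner-angle signs. The paper parametrizes by choosing $\arc(c_1,c_3)$ directly rather than by the fold angle at $c_4$, but the geometry is the same.

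Your degree~5 reduction, however, has a real gap in the cross case. If the bird's foot is a mountain cross $c_1,c_2,c_3,c_4$ (with $c_1,c_3$ collinear and $c_2,c_4$ collinear) together with a single valley $c_5$, then \emph{no} crease can be deleted to leave a degree~4 bird's foot: deleting $c_5$ leaves an all-mountain pattern, while deleting any cross leg, say $c_1$, leaves three mountains $c_2,c_3,c_4$ with $\angle(c_4,c_2)=\pi$ exactly, so they fail the strict tripod inequality. Hence the plan ``fold a degree~4 bird's foot and then insert the fifth crease via Lemma~\ref{lem2}'' cannot get started here. (Your case split also omits the $4M{+}1V$ tripod configuration, though that one \emph{is} salvageable by deleting the non-tripod mountain rather than the valley.) The dimension-count alternative does not rescue this either: the content is precisely whether the flat state is isolated on the configuration curve, and the cross sits on the degenerate boundary where the relevant angles equal $\pi$, so a genericity appeal is not enough without the very sign analysis you are trying to avoid.

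The paper sidesteps the reduction entirely. It builds the degree~5 cross fold directly by the same diagonal decomposition used in degree~4: one side of the diagonal now carries a spherical \emph{quadrilateral} $c_1c_2c_2'c_3$ (the four cross creases) in place of the triangle $c_1c_2c_3$, the other side still carries the triangle through the opposite-parity crease, and the sign check goes through verbatim. You should handle the cross case by this explicit construction rather than by reduction to degree~4.
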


\begin{proof}
Suppose the bird's foot contains a tripod with three valley creases $c_1$, $c_2$, and $c_3$ and one mountain crease $c_4$, which we may assume is between $c_1$ and $c_3$ like the one shown in Figure~\ref{fig4}(a).  We aim to build a closed spherical linkage for this vertex and thus prove that it has a rigidly folded state.  Place $c_1$ and $c_3$ on the equator of a sphere so that $\arc(c_1, c_3)<\alpha_3+\alpha_4<\pi$.   Then the arcs $c_1c_2$ and $c_2c_3$ can be drawn on the northern hemisphere of the sphere so that $c_1c_2c_3$ forms a spherical triangle (see Figure~\ref{fig4}(b)) whose exterior angles $\theta_1, \theta_2, \theta_3 > 0$. (Here, $\theta_2$ is the fold angle of crease $c_2$.)    

Consider the arcs $c_3c_4$, and $c_4c_1$ to be drawn on the sphere. There will be two candidate positions for $c_4$ to do this because $\arc(c_1, c_3)<\alpha_3+\alpha_4$. We chose the one in the northern hemisphere. Label the interior angles of triangle $c_1c_3c_4$ with $\phi_1, \phi_4, \phi_3 > 0$. (Here, $\phi_4 - \pi$ is the fold angle of crease $c_4$.)

The fold angles of $c_1$ and $c_3$ are then $\theta_1 + \phi_1 > 0$ and $\theta_3 + \phi_3  > 0$, respectively; see Figure~\ref{fig4}(c).
The motions of $c_1c_2c_3$ and $c_1c_3c_4$ are both continuous from a flat state. This means that we can assume that $\theta_1, \theta_3$ and $\phi_1, \phi_3$ are all small enough so that $\theta_1 + \phi_1$ and $\theta_3 + \phi_3$ do not exceed $\pi$, thus ensuring that $c_1$ and $c_3$ will be valley creases.

Therefore this produces a folding motion with $c_1$, $c_2$, $c_3$ being valleys and $c_4$ being a mountain.

For cross case, just replace triangle $c_1c_2c_3$ with a $c_1c_2c_2'c_3$ spherical quadrilateral, and the rest follows the same argument.  This gives us a rigidly-folded instantiation of our bird's foot, which we know can then be rigidly unfolded by \cite{Streinu}. 

\end{proof}

\begin{figure}
\centerline{\includegraphics[scale=.5]{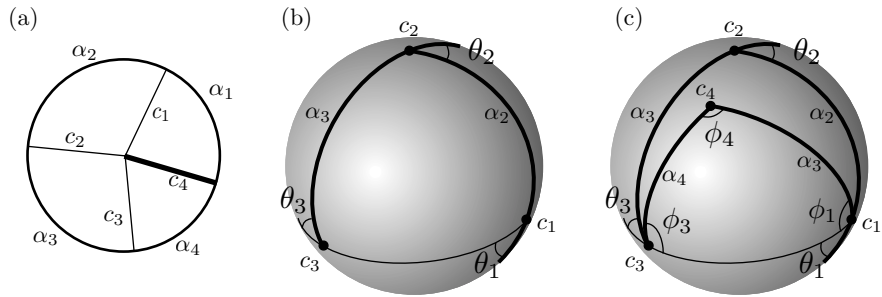}}
\caption{Three creases of a tripod bird's foot make a spherical triangle when folded, and then the fourth crease may be added.}\label{fig4}
\end{figure}

\vspace{.2in}

\begin{proof}[of Theorem \ref{thm1}]  
We start with the $\Leftarrow$ direction.  We know by Lemma \ref{lem3} that a degree 4 single-vertex crease pattern that is a bird's foot, with tripod, will rigidly fold, as will a degree 5 vertex that is a bird's foot with cross.  By Lemma \ref{lem2} if we add creases to this it will still rigidly fold.  Therefore any single-vertex crease pattern that contains a bird's foot will rigidly fold in a continuous parameterized family.

Of course, the other direction is harder.  We assume $(C,\mu)$ will fold rigidly (by some small amount) and suppose that no bird's foot (tripod or cross) exist in $(C,\mu)$.  Let us also assume that $(C,\mu)$ is not all Ms (or all Vs) because in those cases there are standard ways to show that it can't rigidly fold even a little while using all of the creases (e.g., the Gauss map makes a spherical polygon with non-zero area; see \cite{Huff} or \cite{Miura1}).

So what must $(C,\mu)$ look like?  Well, the set of all mountains in $C$ must be confined to a half-plane (or a semicircle, if we think of $C$ as being drawn on a disk).  This is because no tripods and no crosses implies all the mountains are contained in a half-circle.  The same is true for all of the valley creases in $C$.  

\begin{figure}
\centerline{\includegraphics[scale=.7]{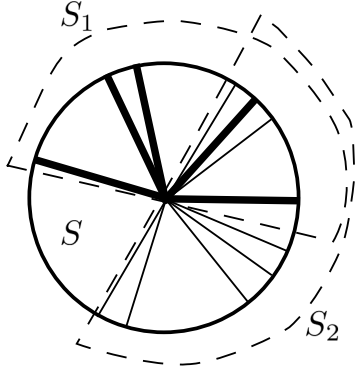}}
\caption{Overlapping semicircles $S_1$ and $S_2$, and the sector $S$ that will remain fixed in the folding of the vertex.}\label{fig:overlap}
\end{figure}

The half-circle of all Ms could overlap with the half-circle of all Vs, however.  In fact, we can always pick two semicircles $S_1$ and $S_2$ such that $S_1$ contains all the Ms, $S_2$ contains all the Vs, and $S_1\cap S_2\not=\emptyset$.  (Even if the Ms and Vs split the disk evenly, we can modify one of the semicircles to make them have overlap.)  This will mean that there is a sector of the paper disk, call it $S$, that is neither completely in $S_1$ nor $S_2$.  (See Figure \ref{fig:overlap}.)  The sector $S$ defines a plane, and we imagine this plane being fixed, say in the equator disk of a sphere $B$, while we fold the rest of the disk inside $B$.

The sector $S$ is bounded by a M crease and a V crease.  Let $a$ (resp. $b$) be the endpoint on the circle of the M (resp. V) crease bounding $S$, and let $\alpha$ be the angle between $a$'s mountain crease and $b$'s valley crease.  Let $a'$ (resp. $b'$) be the antipodal point of $a$ (resp. $b$).  Then $a'$ is on the boundary of the semicircle $S_1$, and so between $b$ and $a'$ there are only valley creases.  Similarly, between $a$ and $b'$ there are only mountain creases.  Our goal is to show that $a'$ and $b'$ get further apart in the folding, which is a contradiction.  (See Figure~\ref{fig5} for visuals on this.)

We track regions on the surface of the sphere $B$ to which $a'$ and $b'$ can move under the rigid folding.  Denote $C_p(r)$ to be a circle, with interior, drawn on the sphere with center $p$ on the sphere and arc-radius $r$.  That is,
$C_p(r) = \{x\in B\ :\ \arc(x,p)\leq r\}$.
Then the region where point $a'$ can be folded is precisely $C_b(\pi-\alpha)$; the boundary of $C_b(\pi-\alpha)$ is achieved by rotating $a'$ about $b$-$b'$ line, which is the same as folding the crease whose endpoint is $b$, and folding any of the other valley creases between $a'$ and $b$ will only move $a'$ inside $C_b(\pi-\alpha)$. Similarly, the image of the point $b'$ under the rigid folding (while keeping sector $S$ fixed) is $C_a(\pi-\alpha)$.

\begin{figure}
\centerline{\includegraphics[scale=.7]{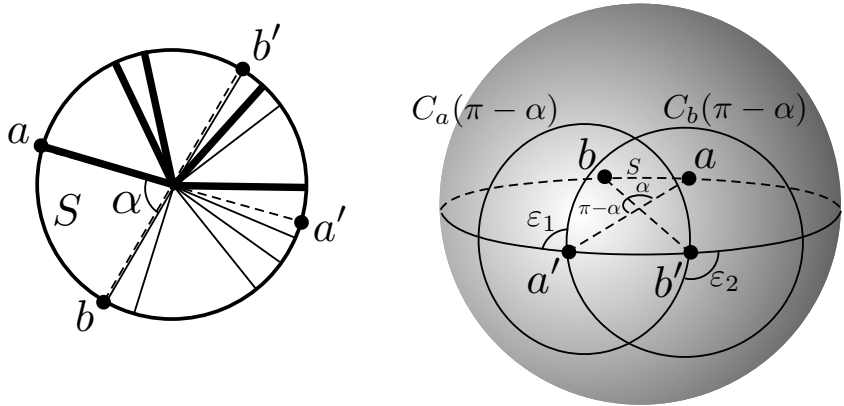}}
\caption{In the proof of Theorem~\ref{thm1}, when the creases are all folded a little, $a'$ and $b'$ can only move into the regions $\varepsilon_1$ and $\varepsilon_2$, respectively.}\label{fig5}
\end{figure}

Now, if all the valley creases between $a'$ and $b$ are folded, the point $a'$ will move above the equator of $B$ in the region marked $\varepsilon_1$ in Figure~\ref{fig5}.  Similarly, if all the mountain creases between $b'$ and $a$ are folded, the point $b'$ will move below the equator in the region labeled $\varepsilon_2$ in Figure~\ref{fig5}.  But the shortest distance between the regions $\varepsilon_1$ and $\varepsilon_2$ is $\arc(a',b')$ which is only achieved along the equator when the crease pattern $C$ is in the unfolded position.  Thus when the creases are folded, the points $a'$ and $b'$ must increase their distance from each other, which is impossible because it would require the paper to rip. 
\end{proof}

An interesting corollary follows from Theorem \ref{thm1}.  We say that a single-vertex crease pattern {\em pops up} when folded if all the creases fold to the lower half-sphere when we place the vertex at the center of a sphere with the unfolded paper on the equatorial plane.  We say that a single vertex {\em pops down} if the creases all fold to the upper half-sphere.

\begin{corollary}\label{cor1}
If a single-vertex crease pattern has a M tripod or cross then it can be folded to pop up.  If a single-vertex crease pattern has a V tripod or cross then it can be folded to pop down.  
\end{corollary}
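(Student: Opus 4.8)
The strategy is to realize a pop-up folded state of $(C,\mu)$ by first producing one for a small bird's-foot subpattern, reusing the construction inside Lemma~\ref{lem3}, and then re-inserting the remaining creases one at a time via the construction inside Lemma~\ref{lem2}, all the while keeping every crease vertex strictly inside a single open half-sphere. I will prove the mountain statement (a mountain tripod or cross yields a pop-up fold); the valley statement follows by reflecting the whole picture across the equatorial plane, and is in fact exactly the configuration built in the proof of Lemma~\ref{lem3}.

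First, the reduction. If $(C,\mu)$ contains a mountain tripod or cross, it also contains a valley crease, since an all-mountain single vertex cannot rigidly fold at all, as noted in the proof of Theorem~\ref{thm1}. Fix such a valley crease $c'$. The creases of the mountain tripod (three of them) or mountain cross (four of them), together with $c'$, form a degree-$4$ or degree-$5$ bird's foot $B$ contained in $C$; after cyclically relabeling we may take $B$ to be of the exact form treated in Lemma~\ref{lem3}, with mountains and valleys interchanged. The remaining creases of $C$ only subdivide angular sectors of $B$, so $C$ is obtained from $B$ by finitely many single-crease additions.

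Second, the base case. Run the construction in the proof of Lemma~\ref{lem3} with the roles of mountain and valley interchanged, equivalently reflected across the equator: draw the spherical triangle (resp.\ quadrilateral) spanned by the mountain creases of $B$ so that it bulges into the southern hemisphere, and take $c'$ at the southern of its two candidate positions. This is a valid rigidly folded state of $B$; in it, the two vertices $c_1$ and $c_3$ that Lemma~\ref{lem3} pins to the equator lie on the equator and every other vertex lies strictly in the open southern hemisphere. Since $\arc(c_1,c_3)<\pi$, which the construction guarantees, $c_1$ and $c_3$ lie within a common open half-circle of the equator, so a sufficiently small rotation of the entire folded linkage brings all of its vertices strictly below the equator. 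Hence $B$ can be folded to pop up.

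Third, re-insertion and the obstacle. Add the remaining creases of $C$ back one at a time as in the proof of Lemma~\ref{lem2}: each new crease $c''$ is realized as a small tent or trough replacing a short arc of the current linkage, so $c''$ lands within an arbitrarily small displacement of a point on that arc while every other vertex moves only slightly. An arc joining two vertices already strictly below the equator stays strictly below it, so by choosing each perturbation small enough at each of the finitely many steps, every crease vertex remains strictly in the open southern hemisphere, and the final folded state of $(C,\mu)$ pops up. The one point needing real care is the base case --- upgrading the Lemma~\ref{lem3} configuration, which literally leaves two vertices on the equator, to one in which \emph{every} vertex is strictly interior to a half-sphere --- and that is precisely where the condition $\arc(c_1,c_3)<\pi$ (a consequence of having more than two creases, i.e.\ a nondegenerate spherical linkage) is used; everything after that is routine perturbation bookkeeping.
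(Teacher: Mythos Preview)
Your argument is correct and is essentially the same approach the paper has in mind: the paper's own proof is the single sentence ``the proof of this simply follows from the construction given in Lemma~\ref{lem2},'' and you have carefully unpacked that sentence by using the Lemma~\ref{lem3} construction as the hemisphere-contained base case and then re-inserting the remaining creases via Lemma~\ref{lem2} while tracking that everything stays in one half-sphere. Your small-rotation step to push the two equatorial vertices strictly into the open hemisphere before re-insertion is a nice detail that the paper leaves implicit, and your use of the all-mountain obstruction to guarantee a valley crease tacitly assumes $(C,\mu)$ is rigidly foldable, which is the natural reading of the corollary in context.
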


The proof of this simply follows from the construction given in Lemma~\ref{lem2}.  

A  consequence of Corollary~\ref{cor1} is that if a single vertex has both a M  and a V tripod (or cross), then it can be made to snap from a pops up folded state to a pops down one.  This characterizes formally a phenomenon seen in practice when rigidly folding single-vertex crease patterns.

\subsection{Vertices without a MV assignment}\label{sec:3.2}

It is possible to make a more general version of Theorem~\ref{thm1} that looks at a single-vertex crease pattern $C$ with no MV assignment $\mu$ associated to it.  In other words, we are asking the question, ``Just by looking at the arrangements of creases meeting at a vertex, can we determine if it could rigidly fold in some way?"  

To do this, we need to augment our definitions slightly.  By an {\em unspecified cross} in a single-vertex crease pattern $C$ we mean cross without the mountain-valley assignment $\mu$.  That is, the unspecified cross is four creases $c_1, c_2, c_3, c_4\in C$ such that $c_1$ and $c_3$ form a straight line, as do $c_2$ and $c_4$.
We also say that a single-vertex crease pattern $C$ drawn on a domain $A$ is {\em rigidly foldable} if there exists an injective, continuous, piecewise isometry $f:A\rightarrow\mathbb{R}^3$ that is a rigid origami with crease pattern $C$.

\begin{theorem}\label{thm1.5}
A single-vertex crease pattern $C$ is rigidly foldable if and only if $C$ has at least four creases, all sector angles of $C$ are strictly less than $\pi$, and $C$ is not an unspecified cross.
\end{theorem}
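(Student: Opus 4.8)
The plan is to reduce Theorem~\ref{thm1.5} to Theorem~\ref{thm1} by quantifying over mountain--valley assignments. Since single-vertex rigid foldability is closed under adding creases (Lemma~\ref{lem2}), a rigid folding of $C$ that folds only a subset $C'$ of the creases exhibits $C'$, and hence also $C$, as rigidly foldable; so it is immaterial whether we insist that every crease of $C$ be folded, and the flat map is of course excluded. Reading off the mountain/valley signs of the fold angles, $C$ is rigidly foldable if and only if $(C,\mu)$ is rigidly foldable for some MV assignment $\mu$, and by Theorem~\ref{thm1} this holds if and only if some $(C,\mu)$ contains a bird's foot. It remains to characterize, purely in terms of the directions of the creases of $C$, when such a $\mu$ exists.

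To this end I would introduce the \emph{unspecified tripod}: three creases $c_1,c_2,c_3$ in counterclockwise order with $0<\angle(c_i,c_{i+1})<\pi$ for $i=1,2,3$ (mod $3$), i.e.\ a tripod with the MV-condition dropped. Two elementary facts will be used repeatedly: (i) three creases form an unspecified tripod exactly when they do not all lie in a single closed half-disk (equivalently, their smallest enclosing angular arc has length $>\pi$); and (ii) the four creases of an unspecified cross likewise do not all lie in a closed half-disk, since collinear creases point in diametrically opposite directions and a cross contains two such pairs with distinct axes. The heart of the proof is the combinatorial lemma: \emph{if $C$ has at least four creases and all sector angles are strictly less than $\pi$, then $C$ contains an unspecified tripod unless $C$ is itself an unspecified cross.} I would prove it by listing the crease directions cyclically as $p_1<\dots<p_k$ and letting $m$ be the largest index with $p_m-p_1<\pi$; one has $2\le m<k$, since $p_2-p_1$ is a sector angle and since $p_k-p_1<\pi$ would force the wrap-around sector to exceed $\pi$. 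By the choice of $m$ we have $p_{m+1}-p_1\ge\pi$, whereas $p_{m+1}-p_m<\pi$ as it is a sector angle; hence if $p_{m+1}-p_1$ were $>\pi$ strictly, the triple $(p_1,p_m,p_{m+1})$ would have all three counterclockwise gaps in $(0,\pi)$, an unspecified tripod. So in the absence of one, $p_{m+1}-p_1=\pi$; running the same argument with each crease playing the role of $p_1$ shows that every crease of $C$ has a diametrically opposite crease in $C$, so $k$ is even and $C$ is a union of $k/2$ lines through the vertex. A short computation then shows the six creases of any three such lines contain an unspecified tripod (for lines in directions $\theta_1<\theta_2<\theta_3$ in $[0,\pi)$, take the creases at $\theta_1$, $\theta_3$, $\theta_2+\pi$), forcing $k<6$, i.e.\ $k=4$ and $C$ an unspecified cross.

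Granting the lemma, both directions are short. For ($\Leftarrow$): if $|C|\ge 4$, all sector angles are $<\pi$, and $C$ is not an unspecified cross, the lemma yields an unspecified tripod $\{c_1,c_2,c_3\}$; colouring $c_1,c_2,c_3$ mountain, some fourth crease valley, and the rest arbitrarily produces a bird's foot, so $(C,\mu)$ is rigidly foldable by Theorem~\ref{thm1} and hence so is $C$. For ($\Rightarrow$): if $C$ is rigidly foldable then by the reduction some $(C,\mu)$ contains a bird's foot, which exhibits inside $C$ an unspecified tripod or unspecified cross together with at least one further crease, so $|C|\ge 4$. By facts (i)--(ii) those tripod/cross creases do not all lie in a closed half-disk, so $C$ can have no sector angle $\ge\pi$: such a sector would push all of $C$, in particular those creases, into the complementary arc, which fits in a closed half-disk. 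Finally $C$ cannot be an unspecified cross, since such a pattern has exactly four creases, no three of which form an unspecified tripod (any three contain a collinear pair, separated by a counterclockwise gap of exactly $\pi$), so it supports no bird's foot under any assignment, contradicting rigid foldability.

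The step I expect to be the main obstacle is the combinatorial lemma --- specifically, converting the hypothesis ``$C$ has no unspecified tripod'' into the rigid structural conclusion that the creases pair up into diametrically opposite pairs, and then eliminating the case of three or more such pairs. Everything else is routine bookkeeping once that lemma, Theorem~\ref{thm1}, and Lemma~\ref{lem2} are in hand.
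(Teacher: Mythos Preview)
Your proof is correct, and the overall architecture---reduce to Theorem~\ref{thm1} by quantifying over MV assignments, then characterize when a bird's foot can be installed---is the same as the paper's. The forward direction is essentially identical to the paper's, just spelled out more carefully.

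The backward direction, however, proceeds differently. The paper takes the wedge $W$ of \emph{largest} sector angle and looks at the diametrically opposite region: either that region is itself a sector of equal angle (so the four bounding creases form an unspecified cross, and a fifth crease must exist since $C$ is not a cross, yielding a cross-type bird's foot), or there is a crease strictly inside the opposite region, which together with the two creases bounding $W$ forms a tripod. Your argument instead proves a sharper structural lemma---``no unspecified tripod'' forces every crease to have an antipode, and three or more antipodal pairs already contain a tripod---via the sweep index $m$ and a pairing count. Your route yields the pleasant bonus that a tripod (not merely a cross) can always be found whenever $C$ is not itself a cross, so your bird's foot is always tripod-based; the paper's extremal argument is shorter but sometimes lands on a cross-type bird's foot instead. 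Both are clean; yours is a bit more work but gives a tidier intermediate statement.
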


\begin{proof}
If $C$ is rigidly foldable, then Theorem \ref{thm1} gives us that $C$ contains a bird's foot.  This $C$  cannot be an unspecified cross, and the bird's foot sector angles are all less than $\pi$, implying that the sector angles for all of $C$ are less than $\pi$.  

In the other direction, suppose we have a single-vertex crease pattern $C$ with no MV assignment and at least four creases, not an unspecified cross, and all sector angles less than $\pi$. Let $W$ be the wedge between consecutive creases with the largest sector angle. It is possible for there to be another wedge diametrically opposite $W$, with the same angle, so that the four creases bounding this wedge form an unspecified cross, but in this case in order for the whole pattern to not be an unspecified cross there must be an additional crease somewhere; labeling the cross mountain and the additional crease valley creates a bird's foot and fulfills the requirements of a rigidly foldable vertex. On the other hand, if the creases do not form an equal wedge diametrically opposite $W$, then there is a crease within that opposite wedge (since $W$ has the largest sector angle) that, together with $W$, forms a tripod. Again, we can assign this tripod to be a mountain fold and assign any other crease to be a valley fold and thus satisfy Theorem \ref{thm1}. 
\end{proof}

\section{Forcing Sets in Rigid Vertex Folds}
\label{sec:4}

Given a rigidly foldable (at least by a small amount) vertex $C_n$ of degree $n$ and a MV assignment $\mu:C_n\rightarrow \{M,V\}$, let $f(C_n)$ denote a {\em minimal forcing set} for $C_n$.  That is, $f(C_n)$ is a subset of the creases such that the only possible MV assignment for $C_n$ that agrees with $\mu$ on $f(C_n)$ is $\mu$ itself, and no other forcing set of smaller size exists.

\begin{theorem}\label{thm2}
Using the above notation, we have that
$$n-3\leq |f(C_n)| \leq n\mbox{ for }n\geq 6,$$
$$2\leq |f(C_5)|\leq 4,\mbox{ and}$$
$$1\leq |f(C_4)|\leq 2.$$
\end{theorem}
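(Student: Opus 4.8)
The plan is to prove the bounds in Theorem~\ref{thm2} by combining the bird's-foot characterization (Theorem~\ref{thm1}) with a counting argument about which MV assignments on the complement of a proposed forcing set can be ``flipped'' away from $\mu$ while remaining rigidly foldable. The key conceptual observation is this: a subset $F\subseteq C_n$ is a forcing set for $(C_n,\mu)$ if and only if there is no MV assignment $\nu\ne\mu$ agreeing with $\mu$ on $F$ such that $(C_n,\nu)$ contains a bird's foot. So the whole problem reduces to understanding, for a rigidly foldable $(C_n,\mu)$, how few creases one must pin down to destroy every bird's foot of every competing assignment, and dually how many one might be forced to pin down.

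For the \emph{lower} bounds I would argue that small forcing sets cannot exist because the complement is large enough to still support a bird's foot under many assignments. Concretely, a bird's foot needs only four (tripod case) or five (cross case) creases of a prescribed sign pattern. If $|F| = n-4$ for $n\ge 6$, then four creases remain unpinned; I want to show one can always re-sign those (or fewer of them, plus creases in $F$ that $\mu$ already forced a certain way) to obtain a bird's foot that is \emph{not} the one forced by $\mu$ — i.e.\ to find a genuinely different rigidly foldable assignment. The cleanest route is: since $(C_n,\mu)$ is rigidly foldable it already contains some bird's foot $B$; if $F$ misses a crease $c\notin B$ we can flip $c$'s sign (since a bird's foot only requires certain creases to be fixed, the extra crease $c$ can go either way, but the resulting $\nu\ne\mu$ is still foldable), so a forcing set must contain every crease outside some minimal bird's foot — which has size $\le 5$ — giving $|F|\ge n-5$; improving this to $n-3$ requires a sharper argument that among the pinned-down creases one cannot simultaneously kill the ``mountain'' bird's foot and the ``valley'' bird's foot possibilities, so one must actually pin all but three. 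For $n=5$ and $n=4$ the bounds are small and can be checked by a direct, essentially finite, case analysis on the cyclic sign sequences of length $5$ and $4$, using Theorem~\ref{thm1.5} to know which patterns are foldable at all.

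For the \emph{upper} bounds the statement $|f(C_n)|\le n$ is trivial (pin everything), so the content is $|f(C_5)|\le 4$ and $|f(C_4)|\le 2$: one must exhibit, for every rigidly foldable $(C_5,\mu)$, a $4$-element forcing set, and for every rigidly foldable $(C_4,\mu)$ a $2$-element forcing set. Here I would use Theorem~\ref{thm1}/Theorem~\ref{thm1.5}: a foldable degree-$4$ vertex is (up to relabeling) a tripod-plus-one, i.e.\ three creases of one sign and one of the other, with the singleton crease lying strictly between two of the majority creases and all sector angles $<\pi$. I would pick the two creases whose sector-angle geometry is most constrained — say the minority crease and one adjacent majority crease — and check that no other MV assignment agreeing with $\mu$ on those two is a bird's foot; the degree-$4$ case only has $2^4=16$ assignments so this is a finite verification aided by the angle conditions. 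The degree-$5$ case similarly reduces to analyzing the foldable sign patterns of length $5$ (which by Theorem~\ref{thm1} must contain a tripod or, if a cross is present, a cross-plus-one) and exhibiting four creases that pin it.

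The main obstacle I expect is the lower bound $|f(C_n)|\ge n-3$ for $n\ge 6$: showing that $n-4$ pinned creases never suffice is not automatic, because whether a re-signing of the remaining creases yields a \emph{new} bird's foot depends delicately on the cyclic positions and sector angles of the creases, not just on counts. The crux will be a lemma of the form ``in any rigidly foldable $(C_n,\mu)$ with $n\ge 6$, for any set $F$ of $n-4$ creases there is an MV assignment $\nu\ne\mu$ with $\nu|_F=\mu|_F$ and $(C_n,\nu)$ containing a bird's foot,'' and proving it will require exploiting that with $n\ge 6$ creases there is enough combinatorial room — e.g.\ among the $4$ free creases, or among the $3$ free creases together with a sign-ambiguous pinned crease, one can always realize both a tripod and its opposite-parity extra crease in a way distinct from $\mu$'s witnessing bird's foot. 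I would handle this by a careful placement argument on the circle of crease endpoints, distinguishing whether $\mu$'s bird's foot uses a tripod or a cross and whether the free creases fall into the relevant semicircles, and I expect the cross sub-case (which ``costs'' five creases and pins down a straight-line structure) to be the fiddliest part of the count.
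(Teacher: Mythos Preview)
Your overall strategy---reduce forcing to the bird's-foot criterion of Theorem~\ref{thm1} and then argue combinatorially---matches the paper's. But there is a genuine gap in your lower-bound argument for $n\ge 6$. Your ``flip any crease outside a fixed bird's foot $B$'' observation is correct and shows that a forcing set must contain $C_n\setminus B$; since $|B|\in\{4,5\}$, this yields only $|F|\ge n-5$ (or $n-4$ in the tripod case). You recognize this shortfall and promise a ``careful placement argument on the circle,'' but you never supply it, and the sketch you give---distinguishing whether $\mu$'s bird's foot is a tripod or a cross and where the free creases land in semicircles---does not by itself close the gap between $n-5$ and $n-3$.

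The paper's proof bypasses this difficulty by a different case split: rather than anchoring on a single bird's foot of $\mu$, it asks whether $F$ itself, or its complement $\overline{F}$ (which has size $\ge 4$), contains a tripod or cross. The constructive engine you are underusing is Lemma~\ref{lem2}: one can first rigidly fold any sub-crease-pattern that already contains a bird's foot and \emph{then} insert the remaining creases one at a time with \emph{arbitrary} MV signs. So, for instance, if $F$ contains a mountain tripod then one crease $c\in\overline{F}$ lies between two of its legs; set $\mu'(c)=V$, fold the resulting bird's foot, and then add the remaining $\ge 3$ creases of $\overline{F}\setminus\{c\}$ with any signs---already $2^3$ rigidly foldable assignments agreeing with $\mu$ on $F$. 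Analogous arguments handle the cases where $\overline{F}$ contains a tripod or cross (flip its signs and rebuild $F$ on top), and a short half-plane observation dispatches the case where neither $F$ nor $\overline{F}$ contains one. This structured decomposition is what delivers $n-3$ directly; your single-bird's-foot viewpoint does not, and the ``sharper argument'' you allude to would have to rediscover essentially this case analysis to succeed.
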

 
\begin{proof} 
Let $n\geq 6$ and suppose for the sake of contradiction that there exists a forcing set $F=f(C_n)$ with $|F|\leq n-4$.  Then the complement $\overline{F}$ is size at least 4.  
 
We consider cases where either $F$ or $\overline{F}$ contain parts of bird's feet (or not).  Without loss of generality we may assume that any tripods or crosses that are part of such bird's feet are mountains.
 
 \vspace{.1in}
 
 {\em Case 1:}  $F$ contains a tripod or cross.  
 
 Then one of the creases, $c$, in $\overline{F}$ is in between the creases of the tripod/cross, and we can let this crease be a V.  Then using the same method described in the proof of the rigid classification theorem (Theorem \ref{thm1}), we can make the other creases in $\overline{F}$ be any combinations of Ms or Vs we want.  That is, we consider the crease pattern $C_n \setminus (\overline{F}\setminus \{c\})$ and rigidly fold this (which is possible because it contains a bird's foot) and then add the creases in $\overline{F}\setminus \{c\}$ one-at-a-time using the process described in the proof of Lemma \ref{lem2}, assigning these three creases to be whatever combination of mountains or valleys we like.  This contradicts the fact that $F$ is forcing.
 
 \vspace{.1in}
 
 {\em Case 2:}  $\overline{F}$ contains a tripod (say all Ms).  
 
 Then let the other crease in $\overline{F}$ be a V to make a bird's foot.  Then we can construct the rigid folding of $C_n$ and the MV assignment $\mu$ by first folding $\overline{F}$ (leaving the creases in $F$ unfolded, with folding angles zero) and then folding the creases in $F$, one-at-a-time, according to $\mu$ using the method from the proof of Lemma \ref{lem2}.
 
 However, we could do the same thing except starting with the tripod in $\overline{F}$ being all Vs and the other crease in $\overline{F}$ being a M.  This would give us two different rigidly foldable MV assignments for $C_n$ that agree on the creases in $F$, contradicting with $F$ being forcing.

 \vspace{.1in}
 
 \begin{figure}
\centerline{\includegraphics[scale=.6]{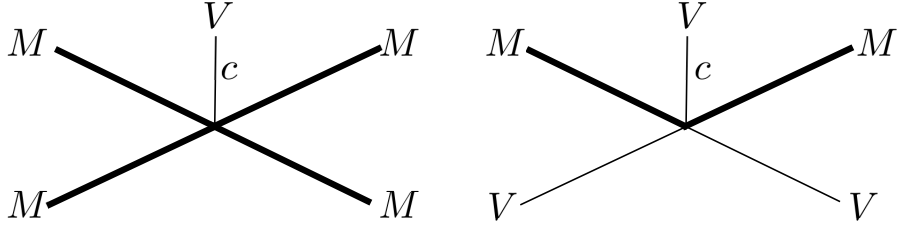}}
\caption{The cross in the complement of $F$ can be folded in two different ways, either as a mountain cross or as a valley tripod.}\label{fig6}
\end{figure}
 
 {\em Case 2.5:}  $\overline{F}$ contains a cross (say all Ms).  
 
 Then $F$ must contain a V ($C_n$ can't be all Ms), so call this crease $c$.  This crease $c$ must be between two of the legs of the cross, and now we can see that the cross can be folded in two different ways, as shown in Figure \ref{fig6}.  We then obtain a contradiction using the same method as in Case 2.
 
 \vspace{.1in}
 
 {\em Case 3:}  Neither $F$ nor $\overline{F}$ contains a tripod or a cross.
 
 Since $C_n$ folds rigidly with $\mu$, it must contain a cross or a tripod as part of some bird's foot.  Also, $\overline{F}$ must live in a half-plane (because if it didn't, then it would contain a potential tripod or cross).
 
 So there exists a crease of $F$ that creates a....
 
 (a) tripod with some of $\overline{F}$.  But then there exists a crease left over in $\overline{F}$ since $|\overline{F}|=4$.  This left over crease can either be a M or a V, contradicting $F$ being forcing.
 
 (b) cross, but only two of the creases in $\overline{F}$ can be in this cross (since $\overline{F}$ lives in a half-plane), leaving at least one crease left on $\overline{F}$ to be a M or a V.  This contradicts  $F$ being forcing.  
 
Cases 1 thru 3 show that $|f(C_n)|\geq n-3$ for $n\geq 6$.  Figure~\ref{fig9} shows examples where $|f(C_n)|=n$ and $|f(C_n)|=n-3$, making our bounds sharp.

 \begin{figure}
\centerline{\includegraphics[scale=.7]{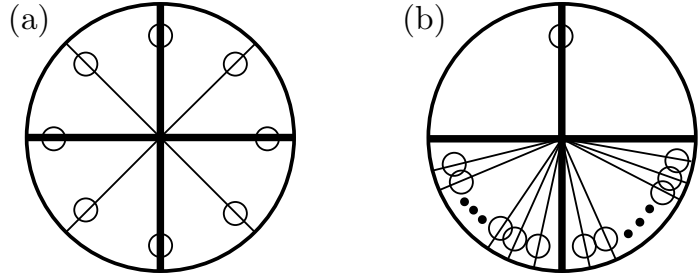}}
\caption{(a) If $n$ is even, the angles between the creases are all equal, and we alternate Ms and Vs, then $|f(C_n)|=n$, since any crease not in the forcing set could be flexed.  (b) A configuration where we have $|f(C_n)|=n-3$, since only one crease from the mountain cross needs to be in the forcing set.}\label{fig9}
\end{figure}
 
When $n<6$ the bounds for the size of the forcing set can be a little different.  The smallest case, $n=4$, can have $|f(C_4)|=1$ by choosing one crease from a mirror-symmetric tripodal bird's foot tripod.  Or we could have $|f(C_4)|=2$ by picking two legs of an asymmetric tripod.  (These are illustrated in Figure~\ref{fig10}(a).)  Since any $n=4$ rigidly foldable singlevertex crease pattern must be a tripodal bird's foot, this covers all cases when $n=4$.

In the $n=5$ case, our crease pattern $(C,\mu)$ could be a tripod bird's foot with an extra crease or a cross bird's foot.  In the former case, if all the angles between creases are equal, we can get away with $|f(C_5)|=4$ , as shown in Figure~\ref{fig10}(b).  If we have a cross, like the mountain cross shown in Figure~\ref{fig10}(b), we need one of the mountain cross legs and the valley crease in our forcing set, giving us $|f(C_5)|=2$.  
 
 This completes the proof of Theorem~\ref{thm2}. 

 \begin{figure}
\centerline{\includegraphics[scale=.5]{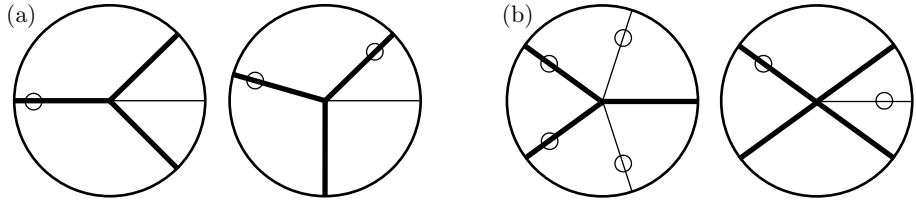}}
\caption{Different kinds of forcing sets when (a) $n=4$ and (b) $n=5$.}\label{fig10}
\end{figure}

\end{proof}
 
 \section{Conclusion}
\label{sec:5}
 
We have proven a necessary and sufficient condition for a single-vertex crease pattern to be able to fold rigidly from an unfolded state.  This condition depends only on the intrinsic geometry of the given crease pattern, as opposed to knowing the specific folding angles of the creases.  It is therefore very easy to check and can serve as a foundation-level result for rigid-foldability in the same way that Kawasaki's and Maekawa's Theorems are for flat-foldability.
We have also demonstrated the utility of this result by applying it to determine bounds on the size of minimal forcing sets of rigidly foldable single-vertex crease patterns.  

Finding a similar rigid-foldability result for multi-vertex crease patterns is an open problem.
The multiple vertex problem is more difficult than just checking the local conditions because we would also need to make the folding angles be consistent from one vertex to adjacent vertices.  As in the flat-foldability problem, characterizing global rigid-foldability seems to be difficult.


\bibliographystyle{plain}       
\bibliography{ACDEHKLT.bib}   

\begin{thebibliography}{10}

\bibitem{Balkcom}
D.~Balkcom.
\newblock {\em {Robotic Origami Folding}}.
\newblock PhD thesis, Carnegie Mellon University, 2002.

\bibitem{Hull5}
B.~Ballinger, M.~Damian, D.~Eppstein, R.~Y. Flatland, J.~Ginepro, and T.~Hull.
\newblock {Minimum forcing sets for Miura folding patterns}.
\newblock In P.~Indyk, editor, {\em Proc. of the Twenty-Sixth Annual ACM-SIAM
  Symposium on Discrete Algorithms, SODA 2015, San Diego, CA, USA, January
  4{--}6, 2015}, pages 136{--}147. SIAM, 2015.

\bibitem{belcastro}
{\lowercase{S}.-m.}~belcastro and T.~Hull.
\newblock {Modelling the folding of paper into three dimensions using affine
  transformations}.
\newblock {\em Linear Algebra and its Applications}, 348:273{--}282, 2002.

\bibitem{Dem}
E.~D. Demaine and J.~O'Rourke.
\newblock {\em {Geometric Folding Algorithms: Linkages, Origami, Polyhedra}}.
\newblock Cambridge University Press, Cambridge, 2007.

\bibitem{Huff}
D.~A. Huffman.
\newblock {Curvature and creases: A primer on paper}.
\newblock {\em IEEE Transactions on Computers}, 25(10):1010{--}1019, 1976.

\bibitem{Hull}
T.~Hull.
\newblock {Counting mountain-valley assignments for flat folds}.
\newblock {\em Ars Combinatoria}, 67:175{--}187, 2003.

\bibitem{Hull1}
T.~Hull.
\newblock {Configuration spaces for flat vertex folds}.
\newblock In R.~J. Lang, editor, {\em Origami$^4$: Fourth Int. Meeting of
  Origami Science, Mathematics, and Education}, pages 361{--}370. A K Peters,
  2009.

\bibitem{Hull2}
T.~Hull.
\newblock {\em {Project Origami: Activities for Exploring Mathematics}}.
\newblock CRC Press, Boca Raton, FL, 2nd edition, 2013.

\bibitem{Chang}
T.~Hull and E.~Chang.
\newblock {Flat vertex fold sequences}.
\newblock In P.~Wang-Iverson, R.~J. Lang, and M.~Yim, editors, {\em
  Origami$^5$: Fifth Int. Meeting of Origami Science, Mathematics, and
  Education}, pages 599{--}607. CRC Press, Boca Raton, FL, 2011.

\bibitem{Kawa}
T.~Kawasaki.
\newblock {$R(\gamma)=1$}.
\newblock In K.~Miura, editor, {\em Origami Science and Art: Proc. of the
  Second Int. Meeting of Origami Science and Scientific Origami}, pages
  31{--}40, Otsu, Japan, 1997. Seian University of Art and Design.

\bibitem{Miura}
K.~Miura.
\newblock {Zeta-core sandwich: its concept and realization}.
\newblock Technical Report~37, The Institute of Space and Astronautical
  Science, 1972.

\bibitem{Miura1}
K.~Miura.
\newblock {A note on intrinsic geometry of origami}.
\newblock In H.~Huzita, editor, {\em Proc. of the First Int. Meeting of Origami
  Science and Technology, Ferrara, Italy}, pages 239{--}249, 1989.

\bibitem{Schenk}
M.~Schenk, S.~G. Kerr, A.~M. Smyth, and S.~D. Guest.
\newblock {Inflatable cylinders for deployable space structures}.
\newblock In F.~Escrig and J.~S{\'a}nchez, editors, {\em Proc. of the First
  Conference Transformables 2013, In the Honor of Emilio P{\'e}rez Pi{\~n}ero,
  Seville, Spain}, Madrid, Spain, 2013. Starbook Editorial.

\bibitem{Streinu}
I.~Streinu and W.~Whiteley.
\newblock {Single-vertex origami and spherical expansive motions}.
\newblock In J.~Akiyama, M.~Kano, and X.~Tan, editors, {\em Discrete and
  Computational Geometry: Japanese Conference, JCDCG 2004, Tokyo, Japan,
  October 8-11, 2004, Revised Selected Papers}, volume 3742 of {\em Lecture
  Notes in Computer Science}, pages 161{--}173. Springer, Berlin, 2005.

\bibitem{Tachi3}
T.~Tachi.
\newblock {Rigid folding of periodic triangulated origami tessellations}.
\newblock In {\em Origami$^6$: Sixth Int. Meeting of Origami Science,
  Mathematics, and Education}.
\newblock To appear.

\bibitem{Tachi0}
T.~Tachi.
\newblock {Generalization of rigid-foldable quadrilateral-mesh origami}.
\newblock {\em J.of the Int. Assoc. for Shell and Spatial Structures},
  50(3):173{--}179, 2009.

\bibitem{Tachi1}
T.~Tachi.
\newblock {Simulation of rigid origami}.
\newblock In R.~J. Lang, editor, {\em Origami$^4$: Fourth Int. Meeting of
  Origami Science, Mathematics, and Education}, pages 175{--}187. A K Peters,
  2009.

\bibitem{Tachi2}
T.~Tachi.
\newblock {Rigid-foldable thick origami}.
\newblock In P.~Wang-Iverson, R.~J. Lang, and M.~Yim, editors, {\em
  Origami$^5$: Fifth Int. Meeting of Origami Science, Mathematics, and
  Education}, pages 253{--}263, Boca Raton, FL, 2011. CRC Press.

\end{thebibliography}

\end{document}